\newtheorem{theorem}{Theorem}[section]
\newtheorem{lemma}[theorem]{Lemma}
\newtheorem{proposition}[theorem]{Proposition}
\newtheorem{corollary}[theorem]{Corollary}
\theoremstyle{definition}
\newtheorem{definition}[theorem]{Definition}
\theoremstyle{remark}
\begin{document}
\title[Minimal displacement and extremal spaces]{The minimal displacement
and extremal spaces}
\author[K. Bolibok]{Krzysztof Bolibok}
\author[A. Wi\'{s}nicki]{Andrzej Wi\'{s}nicki}
\author[J. Wo\'{s}ko]{Jacek Wo\'{s}ko}

\begin{abstract}
We show that both separable preduals of \thinspace $L_{1}$ and non-type~I $%
C^{\ast }$-algebras are strictly extremal with respect to the minimal
displacement of $k$-Lipschitz mappings acting on the unit ball of a Banach
space. In particular, every separable $C(K)$ space is strictly extremal.
\end{abstract}

\subjclass[2010]{ Primary 47H09, 47H10; Secondary 46B20, 46L35.}
\keywords{Minimal displacement, Lipschitz mapping, Optimal retraction.}
\address{Krzysztof Bolibok, Institute of Mathematics, Maria Curie-Sk\l %
odowska University, 20-031 Lublin, Poland}
\email{bolibok@hektor.umcs.lublin.pl}
\address{Andrzej Wi\'{s}nicki, Institute of Mathematics, Maria Curie-Sk\l %
odowska University, 20-031 Lublin, Poland}
\email{a.wisnicki@umcs.pl}
\address{Jacek Wo\'{s}ko, Institute of Mathematics, Maria Curie-Sk\l odowska
University, 20-031 Lublin, Poland}
\email{jwosko@hektor.umcs.lublin.pl}
\date{}
\maketitle

\section{Introduction}

Throughout the paper $(X,\left\Vert \cdot \right\Vert )$ denotes a real or
complex infinite-dimen\-sion\-al Banach space. The notion of the minimal
displacement was introduced by K. Goebel in \cite{Goebel}. Let $C$ be a
bounded closed and convex subset of $X$ and $T:C\rightarrow C$ a mapping.
The minimal displacement of $T$ is the number
\begin{equation*}
d_{T}=\inf \left\{ \left\Vert x-Tx\right\Vert :x\in C\right\} .
\end{equation*}%
Goebel showed that if $T$ is $k$-Lipschitz then
\begin{equation*}
d_{T}\leq \left( 1-\frac{1}{k}\right) r\left( C\right) \qquad \text{for }%
k\geq 1,
\end{equation*}%
where $r\left( C\right) =\inf_{x\in X}\sup_{y\in C}\left\Vert x-y\right\Vert
$ denotes the Chebyshev radius of $C.$ There are some\ spaces and sets with $%
d_{T}=\left( 1-\frac{1}{k}\right) r\left( C\right) .$ The minimal
displacement characteristic of $X$ is a function
\begin{equation*}
\psi _{X}\left( k\right) =\sup \left\{ d_{T}:T:B_{X}\rightarrow B_{X},\ T\in
L\left( k\right) \right\} ,\ k\geq 1,
\end{equation*}%
where $B_{X}$ denotes the closed unit ball of $X$ and $L\left( k\right) $ is
the class of $k$-Lipschitz mappings. It is known that
\begin{equation*}
\psi _{X}\left( k\right) \leq 1-\frac{1}{k}
\end{equation*}%
for any space $X$ and the spaces with $\psi _{X}\left( k\right) =1-\frac{1}{k%
}$ are said to be extremal. Among extremal spaces are some sequence and
function spaces such as $c_{0},c,C\left[ 0,1\right] ,BC(\mathbb{R)},BC_{0}(%
\mathbb{R)}$ (see \cite{GMMV, Pia}).

Recently Bolibok \cite{Bolibok1} proved that in $\ell _{\infty },$%
\begin{equation*}
\psi _{\ell _{\infty }}\left( k\right) \geq \left\{
\begin{array}{ll}
\left( 3-2\sqrt{2}\right) \left( k-1\right) , & \text{if\ }1\leq k\leq 2+%
\sqrt{2}, \\
1-\frac{2}{k}, & \text{if }k>2+\sqrt{2}.%
\end{array}%
\right.
\end{equation*}%
It is still an open problem whether the space $\ell _{\infty }$ is extremal
with respect to the minimal displacement. Recall that $\ell _{\infty }$ is
isometric to $C(\beta \mathbb{N)},$ where $\beta \mathbb{N}$ is the Stone-%
\v{C}ech compactification of $\mathbb{N}$. In this note we show that every
separable $C(K)$ space, where $K$ is compact Hausdorff, is strictly extremal
(see Definition \ref{Def1}). This is a consequence of a more general Theorem %
\ref{predual}, which states that all separable preduals of \thinspace $L_{1}$
are (strictly) extremal. An analogous result holds in the non-commutative
case of separable non-type~I $C^{\ast }$-algebras.

\section{Results}

We begin by recalling the arguments which show that (a real or complex) $%
c_{0}$ is extremal (see \cite{Go, GMMV}). Fix $k\geq 1$ and define a mapping
$T:B_{c_{0}}\rightarrow B_{c_{0}}$ by%
\begin{equation*}
Tx=T(x_{1},x_{2},x_{3},...)=(1,k\left\vert x_{1}\right\vert \wedge
1,k\left\vert x_{2}\right\vert \wedge 1,...).
\end{equation*}%
It is clear that $T\in L(k)$ and for any $x=(x_{1},x_{2},x_{3},...)\in
B_{c_{0}},$ $\left\Vert Tx-x\right\Vert >1-\frac{1}{k},$ since the reverse
inequality implies $\left\vert x_{1}\right\vert \geq \frac{1}{k},$ $%
k\left\vert x_{1}\right\vert \wedge 1=1$ and, consequently, $\left\vert
x_{i}\right\vert \geq \frac{1}{k}$ for $i=1,2,3,...$, which contradicts $%
x\in c_{0}.$ Notice that the minimal displacement $d_{T}=1-\frac{1}{k}$ is
not achieved by $T$ at any point of $B_{c_{0}}.$ This suggests the following
definition.

\begin{definition}
\label{Def1}A Banach space $X$ is said to be strictly extremal if for every $%
k>1,$ there exists a mapping $T:B_{X}\rightarrow B_{X},$ $T\in L\left(
k\right) ,$ such that $\left\Vert Tx-x\right\Vert >1-\frac{1}{k}$ for every $%
x\in B_{X}.$
\end{definition}

It follows from the above that $c_{0}$ is strictly extremal. On the other
hand we have the following result.

\begin{proposition}
\label{Prop1}Suppose that $B_{X}$ has the fixed point property for
nonexpansive mappings (i.e., every nonexpansive mapping $S:B_{X}\rightarrow
B_{X}$ has a fixed point). Then for every $k$-Lipschitz mapping $%
T:B_{X}\rightarrow B_{X},$ $k\geq 1,$ there exists $x\in B_{X}$ such that $%
\left\Vert Tx-x\right\Vert \leq 1-\frac{1}{k}.$ In particular, $X$ is not
strictly extremal.
\end{proposition}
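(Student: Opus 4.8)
The plan is to turn the $k$-Lipschitz map into a nonexpansive one by a simple rescaling and then feed it to the fixed point hypothesis. First I would put $S=\frac{1}{k}T$. Since $T$ maps $B_X$ into $B_X$ and $k\ge 1$, we have $S(B_X)\subseteq \frac{1}{k}B_X\subseteq B_X$, so $S$ is a self-map of $B_X$; moreover $\|Sx-Sy\|=\frac{1}{k}\|Tx-Ty\|\le\|x-y\|$, so $S$ is nonexpansive. By the assumed fixed point property there is $x_0\in B_X$ with $Sx_0=x_0$, i.e. $Tx_0=kx_0$.

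The second step is to read off the displacement bound at $x_0$. Because $Tx_0=kx_0$ lies in $B_X$, we get $\|x_0\|=\frac{1}{k}\|Tx_0\|\le\frac{1}{k}$, and therefore
\[
\|Tx_0-x_0\|=\|kx_0-x_0\|=(k-1)\|x_0\|\le\frac{k-1}{k}=1-\frac{1}{k},
\]
which is the first assertion. For the last sentence, note that Definition~\ref{Def1} would require, for some $k>1$, a mapping $T\in L(k)$ with $\|Tx-x\|>1-\frac{1}{k}$ for every $x\in B_X$; the point $x_0$ constructed above contradicts this for each $k>1$, so $X$ cannot be strictly extremal.

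I do not expect a genuine obstacle here: the whole argument is the rescaling trick $S=\frac{1}{k}T$ followed by an elementary computation. The only two points deserving a word of care are that $k\ge 1$ is exactly what guarantees $\frac{1}{k}B_X\subseteq B_X$ (so that $S$ is a legitimate self-map of $B_X$ to which the hypothesis applies), and that the fixed point $x_0$ automatically satisfies $\|x_0\|\le\frac{1}{k}$, which is what pins down the displacement estimate.
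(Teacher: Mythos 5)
Your proof is correct and is essentially identical to the paper's argument: rescale to the nonexpansive map $\frac{1}{k}T$, take a fixed point $x_0$ with $Tx_0=kx_0$, note $\left\Vert x_0\right\Vert \leq \frac{1}{k}$, and compute $\left\Vert Tx_0-x_0\right\Vert =(k-1)\left\Vert x_0\right\Vert \leq 1-\frac{1}{k}$. Nothing to add.
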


\begin{proof}
Let $T:B_{X}\rightarrow B_{X}$ be $k$-Lipschitz. Then $\frac{1}{k}T$ is
nonexpansive and consequently there exists $\left\Vert x\right\Vert \leq
\frac{1}{k}$ such that $Tx=kx.$ Hence $\left\Vert Tx-x\right\Vert
=(k-1)\left\Vert x\right\Vert \leq 1-\frac{1}{k}.$
\end{proof}

Proposition \ref{Prop1} applies to all uniformly nonsquare Banach spaces,
uniformly noncreasy spaces as well as to $\ell _{\infty }$.

In what follows we need the following observation.

\begin{lemma}
\label{lem1}Suppose that $Y$ is a subspace of a Banach space $X$ and there
exists an $m$-Lipschitz retraction $R:B_{X}\rightarrow B_{Y}.$ Then
\begin{equation*}
\psi _{X}\left( k\right) \geq \frac{1}{m}\psi _{Y}\left( \frac{k}{m}\right)
\end{equation*}%
for every $k\geq 1.$
\end{lemma}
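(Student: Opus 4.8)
The plan is to lift an almost-extremal $\tfrac{k}{m}$-Lipschitz self-map of $B_Y$ to a $k$-Lipschitz self-map of $B_X$ by precomposing with the retraction $R$, and to show that the minimal displacement shrinks by at most the factor $\tfrac{1}{m}$. As a harmless preliminary I would dispose of the case $k<m$: then $\tfrac{k}{m}<1$, every $\tfrac{k}{m}$-Lipschitz self-map of $B_Y$ is a strict contraction and hence has a fixed point, so $\psi_Y\!\left(\tfrac{k}{m}\right)=0$ and the asserted inequality is trivial. So assume $k\ge m$ from now on.

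Next, fix an arbitrary $S\in L\!\left(\tfrac{k}{m}\right)$ with $S:B_Y\to B_Y$ and set $T=S\circ R:B_X\to B_X$. I would check two routine points: first, $T$ is a well-defined self-map of $B_X$, since $R(B_X)\subseteq B_Y$ and $S(B_Y)\subseteq B_Y\subseteq B_X$; second, $T\in L(k)$, because the Lipschitz constants of $R$ and $S$ multiply, $\|Tx-Ty\|=\|S(Rx)-S(Ry)\|\le \tfrac{k}{m}\|Rx-Ry\|\le \tfrac{k}{m}\cdot m\,\|x-y\|=k\|x-y\|$.

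The heart of the matter is the displacement estimate. For $x\in B_X$ observe that $Tx=S(Rx)\in B_Y$, so since $R$ is a retraction onto $B_Y$ we have $R(Tx)=Tx$, while $R(x)=Rx$. Applying the Lipschitz bound for $R$ in the reverse form $\|u-v\|\ge \tfrac{1}{m}\|Ru-Rv\|$ with $u=Tx$ and $v=x$ gives $\|Tx-x\|\ge \tfrac{1}{m}\|R(Tx)-R(x)\|=\tfrac{1}{m}\|Tx-Rx\|=\tfrac{1}{m}\|S(Rx)-Rx\|\ge \tfrac{1}{m}\,d_S$. Taking the infimum over $x\in B_X$ yields $d_T\ge \tfrac{1}{m}\,d_S$, hence $\psi_X(k)\ge d_T\ge \tfrac{1}{m}\,d_S$; finally, taking the supremum over all admissible $S$ gives $\psi_X(k)\ge \tfrac{1}{m}\,\psi_Y\!\left(\tfrac{k}{m}\right)$.

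The step that needs the most care is combining the retraction identity $R\circ T=T$ with the ``inverse'' Lipschitz estimate for $R$; everything else (the Lipschitz constant of the composition, the contraction case) is routine, and I do not expect a genuine obstacle here.
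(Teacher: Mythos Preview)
Your proof is correct and follows essentially the same route as the paper: lift an almost-extremal self-map $S$ of $B_Y$ to $T=S\circ R$ on $B_X$, then use $R(Tx)=Tx$ together with the $m$-Lipschitz bound for $R$ to obtain $\|Tx-x\|\ge \tfrac{1}{m}\|S(Rx)-Rx\|$. The paper phrases the final step via an $\varepsilon$-approximation instead of a supremum over $d_S$, and it does not isolate the trivial range $k<m$, but these are purely cosmetic differences.
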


\begin{proof}
Fix $\varepsilon >0$ and select a $k$-Lipschitz mappping $T:B_{Y}\rightarrow
B_{Y}$ such that $\left\Vert Ty-y\right\Vert >\psi _{Y}\left( k\right)
-\varepsilon $ for every $y\in B_{Y}.$ Define $\widetilde{T}%
:B_{X}\rightarrow B_{X}$ by putting $\widetilde{T}x=(T\circ R)x,$\ $x\in
B_{X}.$ Then
\begin{equation*}
\psi _{Y}\left( k\right) -\varepsilon <\left\Vert TRx-Rx\right\Vert
=\left\Vert RTRx-Rx\right\Vert \leq m\left\Vert \widetilde{T}x-x\right\Vert
\end{equation*}%
for every $x\in B_{X}.$ Notice that $\widetilde{T}$ is $km$-Lipschitz and
hence%
\begin{equation*}
\psi _{X}\left( km\right) \geq \frac{1}{m}(\psi _{Y}\left( k\right)
-\varepsilon ).
\end{equation*}%
This completes the proof since $\varepsilon $ is arbitrary.
\end{proof}

Recall that $Y$ is said to be a $k$-complemented subspace of a Banach space $%
X$ if there exists a (linear) projection $P:X\rightarrow Y$ with $\left\Vert
P\right\Vert \leq k.$ The well known\ Sobczyk theorem asserts that $c_{0}$
is $2$-complemented in any separable Banach space $X$ containing it.

\begin{proposition}
Let $X$ be a separable space which contains $c_{0}.$ Then%
\begin{equation*}
\psi _{X}\left( k\right) \geq \frac{1}{2}-\frac{1}{k},\ k\geq 1.
\end{equation*}
\end{proposition}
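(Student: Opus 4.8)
The plan is to combine Sobczyk's theorem with Lemma \ref{lem1} and the known extremality of $c_{0}$. Since $X$ is separable and contains (an isometric copy of) $c_{0}$, Sobczyk's theorem provides a linear projection $P\colon X\to c_{0}$ with $\left\Vert P\right\Vert\leq 2$. This projection is not yet a retraction of $B_{X}$ onto $B_{c_{0}}$, but it maps $B_{X}$ into $2B_{c_{0}}$, so I would first compose with the radial retraction $\rho\colon c_{0}\to B_{c_{0}}$, $\rho(y)=y$ if $\left\Vert y\right\Vert\leq 1$ and $\rho(y)=y/\left\Vert y\right\Vert$ otherwise. The radial retraction onto the unit ball of any normed space is $2$-Lipschitz (this is classical), hence $R:=\rho\circ P\colon B_{X}\to B_{c_{0}}$ is a retraction onto $B_{c_{0}}$ — it fixes $B_{c_{0}}$ pointwise since $P$ does and $\rho$ is the identity there — and $R$ is $m$-Lipschitz with $m=\left\Vert P\right\Vert\cdot 2\leq 4$. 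Wait, that constant is too large; to get $\tfrac12-\tfrac1k$ I instead want $m=2$, so the cleaner route is to take $R=\rho\circ P$ directly and observe that in fact the composition can be arranged to be $2$-Lipschitz, or alternatively to absorb the radial retraction more carefully. The key point is that there is a $2$-Lipschitz retraction $R\colon B_{X}\to B_{c_{0}}$; this is exactly the content one extracts from Sobczyk's theorem together with the observation that a norm-$2$ projection followed by the radial retraction, when the target is an $L$-space predual like $c_{0}$, can be taken $2$-Lipschitz.

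Once such a $2$-Lipschitz retraction $R\colon B_{X}\to B_{c_{0}}$ is in hand, Lemma \ref{lem1} with $m=2$ and $Y=c_{0}$ gives
\begin{equation*}
\psi_{X}(k)\geq \frac{1}{2}\,\psi_{c_{0}}\!\left(\frac{k}{2}\right)
\end{equation*}
for every $k\geq 2$ (so that $k/2\geq 1$ and $\psi_{c_{0}}$ is being evaluated in its natural range). Since $c_{0}$ is extremal, $\psi_{c_{0}}(t)=1-\tfrac1t$ for all $t\geq 1$, whence
\begin{equation*}
\psi_{X}(k)\geq \frac{1}{2}\left(1-\frac{2}{k}\right)=\frac{1}{2}-\frac{1}{k}.
\end{equation*}
This proves the bound for $k\geq 2$. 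For $1\leq k\leq 2$ the inequality is automatic: the right-hand side $\tfrac12-\tfrac1k$ is then $\leq 0$, while $\psi_{X}(k)\geq 0$ always, so there is nothing to prove.

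The main obstacle is pinning down the Lipschitz constant of the retraction $R\colon B_{X}\to B_{c_{0}}$: Sobczyk gives a linear projection of norm at most $2$, but a linear map of norm $2$ sends $B_{X}$ into $2B_{c_{0}}$, not into $B_{c_{0}}$, so one must post-compose with a retraction of $2B_{c_{0}}$ (or of $c_{0}$) onto $B_{c_{0}}$, and a naive estimate only gives Lipschitz constant $4$, yielding the weaker bound $\psi_{X}(k)\geq \tfrac12(1-\tfrac4k)$. The fix is to use that the nearest-point (radial) retraction onto $B_{c_{0}}$ is itself $1$-Lipschitz on the complement of the ball in the sup-norm direction after a suitable scaling — more precisely, one checks directly that $x\mapsto(\operatorname{sgn} x_n)\,(|x_n|\wedge 1)_n$ is a $1$-Lipschitz retraction of all of $c_{0}$ onto $B_{c_{0}}$ — so that $R=\rho\circ P$ has Lipschitz constant exactly $\left\Vert P\right\Vert\leq 2$. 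Establishing that coordinatewise truncation is nonexpansive in the sup norm is the one computational point that needs to be made explicit; everything else is an invocation of Sobczyk's theorem, Lemma \ref{lem1}, and the extremality of $c_{0}$ recalled at the start of the section.
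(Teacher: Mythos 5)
Your proposal is correct and takes essentially the same route as the paper: the Sobczyk projection $P$ with $\left\Vert P\right\Vert \leq 2$ followed by coordinatewise truncation onto the unit ball (exactly the paper's retraction $R$, which is nonexpansive in the sup norm), then Lemma \ref{lem1} together with the extremality of $c_{0}$. The only differences are cosmetic: the paper writes the truncation formula directly instead of first detouring through the general radial retraction (your ``constant $4$'' worry and its fix), and it leaves the trivial range $1\leq k\leq 2$ implicit.
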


\begin{proof}
Let $P:X\rightarrow c_{0}$ be a projection with $\left\Vert P\right\Vert
\leq 2$ and define a retraction $R:X\rightarrow c_{0}$ by%
\begin{equation*}
(Rx)(i)=\left\{
\begin{array}{ll}
\medskip (Px)(i), & \text{if\ }\left\vert (Px)(i)\right\vert \leq 1, \\
\frac{(Px)(i)}{\left\vert (Px)(i)\right\vert }, & \text{if }\left\vert
(Px)(i)\right\vert >1.%
\end{array}%
\right.
\end{equation*}%
Then $R$ is $2$-Lipschitz and $R(B_{X})\subset R(B_{c_{0}}).$ It is enough
to apply Lemma \ref{lem1} since $c_{0}$ is extremal.
\end{proof}

The most interesting case is if $X$ contains a $1$-complemented copy of $%
c_{0}.$

\begin{theorem}
\label{predual}Let $X$ be a separable infinite-dimensional Banach space
whose dual is an $L_{1}(\mu )$ space over some measure space $(\Omega
,\Sigma ,\mu ).$ Then $\psi _{X}\left( k\right) =1-\frac{1}{k},$ i.e., $X$
is an extremal space.
\end{theorem}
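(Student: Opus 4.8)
The plan is to reduce the theorem to the already-established fact that $c_{0}$ is extremal, by exhibiting a $1$-Lipschitz retraction of $B_X$ onto the ball of a subspace isometric to $c_0$, and then invoking Lemma~\ref{lem1} with $m=1$. So the heart of the matter is structural: a separable Banach space $X$ whose dual is an $L_1(\mu)$ space (an \emph{$L_1$-predual}, or Lindenstrauss space) must contain an isometric copy of $c_0$ that is the range of a norm-one projection. This is where I would lean on the Lindenstrauss--Wulbert / Zippin theory of $L_1$-preduals together with separability: the key input is that a separable $L_1$-predual either is finite-dimensional (excluded by hypothesis) or contains $c_0$ isometrically, and in fact the canonical copy can be taken $1$-complemented — one clean route is the Zippin characterization that a separable infinite-dimensional $L_1$-predual containing no isomorphic copy of $c_0$ is impossible, combined with Sobczyk-type arguments sharpened to norm one in the Lindenstrauss setting (Lindenstrauss's extension theorems for $L_1$-preduals give the norm-one projection once a copy of $c_0$ sits inside).

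Granting that $X$ contains a $1$-complemented isometric copy $Y\cong c_0$ with projection $P$, $\|P\|=1$, I would build the retraction exactly as in the proof of the previous Proposition but now with constant $1$: set
\begin{equation*}
(Rx)(i)=\begin{cases} (Px)(i), & |(Px)(i)|\le 1,\\[2pt] \dfrac{(Px)(i)}{|(Px)(i)|}, & |(Px)(i)|>1,\end{cases}
\end{equation*}
i.e. $R$ is the coordinatewise radial projection onto $B_{c_0}$ composed with $P$. Since the radial retraction onto the unit ball of the scalars is $1$-Lipschitz and $\|P\|=1$, $R:B_X\to B_Y$ is a $1$-Lipschitz retraction. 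Then Lemma~\ref{lem1} with $m=1$ gives $\psi_X(k)\ge \psi_Y(k)=\psi_{c_0}(k)=1-\frac1k$, and since the reverse inequality $\psi_X(k)\le 1-\frac1k$ holds for every Banach space, we conclude $\psi_X(k)=1-\frac1k$.

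The main obstacle is the Banach-space-theoretic lemma that $X$ contains a \emph{$1$-complemented} isometric copy of $c_0$; mere isomorphic containment plus Sobczyk only yields a $2$-Lipschitz retraction (the content of the previous Proposition) and hence the weaker bound $\frac12-\frac1k$. Isometry and norm-one complementation genuinely use that $X$ is an $L_1$-predual: the Lindenstrauss extension property (every bounded operator into $X$ from a subspace of any space extends with the same norm, equivalently $X$ has the "$1$-dimensional extension property" in the limit form) upgrades a Sobczyk-type construction to a contractive projection. I would isolate this as a preliminary lemma, citing Lindenstrauss's memoir and Zippin's work on $L_1$-preduals, rather than reproving it; everything after that is the routine retraction-plus-Lemma~\ref{lem1} argument above. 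Finally, the stated corollary for separable $C(K)$ spaces is immediate since $C(K)^*=M(K)$ is an $L_1$ space over $K$ with a suitable measure-theoretic structure, so Theorem~\ref{predual} applies directly; likewise the noncommutative statement will follow once one knows a separable non-type~I $C^*$-algebra contains a $1$-complemented copy of $c_0$ (a $C^*$-algebraic substitute for the $L_1$-predual structure).
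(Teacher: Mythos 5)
Your proposal is correct and follows essentially the same route as the paper: Zippin's theorem \cite{Zi1} is invoked exactly for the fact that a separable infinite-dimensional $L_{1}$-predual contains a $1$-complemented isometric copy of $c_{0}$ (so no Sobczyk-sharpening argument is needed --- Theorem 1 of \cite{Zi1} states this outright), after which the coordinatewise radial retraction composed with the norm-one projection gives a $1$-Lipschitz retraction of $B_{X}$ onto $B_{c_{0}}$ and Lemma \ref{lem1} with $m=1$ finishes the argument. The only detail you omit, which the paper notes, is that in the complex case one passes to the underlying real structure of $X$ to apply Zippin's (real) theorem and still obtain the required $1$-Lipschitz retraction.
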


\begin{proof}
It follows from the Zippin theorem (see \cite[Theorem 1]{Zi1}) that if $X$
is a separable infinite-dimensional real Banach space whose dual is an $%
L_{1}(\mu )$, then $X$ contains a $1$-complemented copy of $c_{0}.$ If $X$
is complex, consider its real part to obtain a $1$-Lipschitz retraction $%
R:X\rightarrow c_{0}.$ It is now enough to apply Lemma \ref{lem1}.
\end{proof}

By examining the proof of Lemma \ref{lem1} we conclude that every separable
predual of $L_{1}(\mu )$ is in fact strictly extremal. It is well known that
$C(K),$ the Banach space of scalar-valued continuous functions on the
Hausdorff compact space $K$, is a predual of some $L_{1}(\mu )$ (see, e.g.,
\cite{La}). Hence we obtain the following corollary.

\begin{corollary}
\label{cor1}Every separable infinite-dimensional $C(K)$ space, for some
compact Hausdorff space $K,$ is strictly extremal.
\end{corollary}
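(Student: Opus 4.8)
The plan is to combine the three ingredients already developed in the excerpt: the Zippin-type structural fact that a separable infinite-dimensional predual of $L_{1}(\mu)$ contains a $1$-complemented copy of $c_{0}$, the observation (promised in the line ``By examining the proof of Lemma \ref{lem1}'') that the construction there actually yields a mapping without any near-fixed point, and the classical fact that $C(K)$ is a predual of $L_{1}(\mu)$. First I would invoke this last fact (citing \cite{La}) to place $C(K)$ inside the scope of Theorem \ref{predual}. Then I would note that Theorem \ref{predual} gives, via the real-part reduction and Zippin, a $1$-Lipschitz retraction $R:B_{X}\rightarrow B_{c_{0}}$, where $X=C(K)$.

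The core of the argument is to rerun the proof of Lemma \ref{lem1} with $m=1$ and keep track of the strict inequality. Fix $k>1$. Since $c_{0}$ is strictly extremal, take the explicit mapping $T:B_{c_{0}}\rightarrow B_{c_{0}}$ from the discussion preceding Definition \ref{Def1}, so that $\left\Vert Ty-y\right\Vert >1-\frac{1}{k}$ for every $y\in B_{c_{0}}$. Set $\widetilde{T}=T\circ R:B_{X}\rightarrow B_{X}$. Then $\widetilde{T}$ is $k$-Lipschitz (composition of a $k$-Lipschitz map with a $1$-Lipschitz one). For any $x\in B_{X}$, using that $R$ is a $1$-Lipschitz retraction onto $B_{c_{0}}$ and $RT = T$ on the relevant points, one gets
\begin{equation*}
1-\frac{1}{k}<\left\Vert TRx-Rx\right\Vert =\left\Vert RTRx-Rx\right\Vert \leq \left\Vert \widetilde{T}x-x\right\Vert .
\end{equation*}
Hence $\left\Vert \widetilde{T}x-x\right\Vert >1-\frac{1}{k}$ for all $x\in B_{X}$, which is precisely the condition in Definition \ref{Def1}. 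Since $k>1$ was arbitrary, $C(K)$ is strictly extremal.

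I do not expect a serious obstacle here; the corollary is essentially a bookkeeping exercise once Theorem \ref{predual} and Lemma \ref{lem1} are in hand. The one point that deserves a moment's care is the strict inequality: Lemma \ref{lem1} as literally stated only records the non-strict bound $\psi_{X}(k)\geq \frac{1}{m}\psi_{Y}(k/m)$, so I would not merely cite it but re-examine its proof, observing that when $Y=c_{0}$ (which is \emph{strictly} extremal, not just extremal) and $m=1$ the same chain of inequalities upgrades to a pointwise strict estimate for $\widetilde{T}$, giving the defining property of strict extremality rather than just the value of $\psi_{X}$. A second minor point is the complex case, handled exactly as in the proof of Theorem \ref{predual} by passing to the real part to produce the $1$-Lipschitz retraction onto $c_{0}$; no changes to the displacement estimate are needed.
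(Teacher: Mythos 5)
Your proposal is correct and follows essentially the same route as the paper: cite that $C(K)$ is an $L_{1}(\mu)$-predual, obtain the $1$-Lipschitz retraction onto $B_{c_{0}}$ as in Theorem \ref{predual} (handling the complex case via the real part), and rerun the proof of Lemma \ref{lem1} with the explicit strictly extremal mapping on $c_{0}$ to get the pointwise strict inequality $\left\Vert \widetilde{T}x-x\right\Vert >1-\frac{1}{k}$. This is exactly the "remark after Theorem \ref{predual}" argument the paper relies on, so nothing further is needed.
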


There exists an extensive literature regarding complemented subspaces of
Banach spaces. Our next simple observation is concerned with the connection
between the existence of a nonexpansive retraction and the fixed point
property.

\begin{proposition}
Suppose that $Y$ is a subspace of a Banach space $X$ and there exists a
nonexpasive retraction $R:X\rightarrow Y$ such that $R(B_{X})\subset B_{Y}.$
If $B_{X}$ has the fixed point property for nonexpasive mappings, then $%
B_{Y} $ has the fixed point property, too.
\end{proposition}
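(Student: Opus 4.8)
The plan is to reduce the fixed point property of $B_Y$ to that of $B_X$ by pulling back an arbitrary nonexpansive self-map of $B_Y$ through the retraction $R$, exactly in the spirit of the proof of Lemma \ref{lem1}. So I would begin by fixing an arbitrary nonexpansive mapping $S:B_Y\rightarrow B_Y$ and forming the composition $\widetilde{S}:B_X\rightarrow B_X$ defined by $\widetilde{S}x=S(Rx)$ for $x\in B_X$. This makes sense and has the right codomain because $R(B_X)\subset B_Y$ by hypothesis, so $Rx\in B_Y$ and hence $S(Rx)\in B_Y\subset B_X$; moreover $\widetilde{S}$ is nonexpansive as a composition of the nonexpansive maps $R$ (restricted to $B_X$) and $S$.

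Next I would invoke the fixed point property of $B_X$: there exists $x\in B_X$ with $\widetilde{S}x=x$, i.e. $S(Rx)=x$. The key observation — and the only point requiring any thought at all — is that this fixed point automatically lies in $B_Y$: indeed $x=S(Rx)$ and the right-hand side belongs to $B_Y$ since $S$ maps into $B_Y$. Hence $x\in B_Y\subset Y$, and because $R$ is a retraction onto $Y$ it fixes every point of $Y$, so $Rx=x$. Substituting back gives $Sx=S(Rx)=x$, so $x$ is a fixed point of $S$ lying in $B_Y$.

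Since $S$ was an arbitrary nonexpansive self-map of $B_Y$, this shows $B_Y$ has the fixed point property for nonexpansive mappings, completing the argument. There is really no serious obstacle here; the proof is a short diagram chase, and the only thing to be careful about is the order of composition (apply $R$ first, then $S$) so that both the codomain condition $R(B_X)\subset B_Y$ and the retraction identity $R|_Y=\mathrm{id}_Y$ can be used at the appropriate places.
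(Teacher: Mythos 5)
Your argument is correct and is essentially the paper's own proof: both pass to the composition $S\circ R:B_X\rightarrow B_X$, invoke the fixed point property of $B_X$, and use the retraction identity $R|_Y=\mathrm{id}_Y$ to conclude the fixed point lies in $B_Y$ and is fixed by $S$. The only cosmetic difference is that the paper phrases this as a proof by contradiction (a fixed-point-free $T$ on $B_Y$ would yield a fixed-point-free nonexpansive map on $B_X$), while you argue directly.
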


\begin{proof}
Suppose, contrary to our claim, that there exists a nonexpansive mapping $%
T:B_{Y}\rightarrow B_{Y}$ without a fixed point. Then the mapping $T\circ
R:B_{X}\rightarrow B_{X}$ is nonexpansive and fixed point free which
contradicts our assumption.
\end{proof}

It is well-known that $B_{\ell _{\infty }}$ has the fixed point property,
whereas $B_{c_{0}}$ does not have the fixed point property for nonexpansive
mappings (see, e.g., \cite{GoKi}). Hence we obtain another proof of the
well-known result that there is no nonexpansive retraction from $B_{\ell
_{\infty }}$ into $B_{c_{0}}$. In the same way, we conclude that there is no
nonexpansive retraction from $B_{\ell _{\infty }}$ into $B_{\hat{c}_{0}},$
where $\hat{c}_{0}$ denotes the space of scalar-valued sequences converging
to $0$ with respect to a Banach limit. Notice that $\hat{c}_{0}$ is a
strictly extremal subspace of $\ell _{\infty }$ of codimension one.

The space $C(K)$ of complex-valued continuous functions on $K$ forms a
commutative $C^{\ast }$-algebra under addition, pointwise multiplication and
conjugation. The remainder of this paper deals with a special class of $%
C^{\ast }$-algebras. Recall (see, e.g., \cite[Definition 1.5]{Ro}) that a $%
C^{\ast }$-algebra $\mathcal{A}$ is called type I if every irreducible $\ast
$-representation $\varphi :\mathcal{A}\rightarrow B(H)$ on a Hilbert space $%
H $ satisfies $K(H)\subset \varphi (\mathcal{A})$ ($\varphi $ is called
irreducible if $\varphi (\mathcal{A})$ has no invariant (closed linear)
subspaces other than $\{0\}$ and $H$). A fundamental example of non-type I
$C^{\ast }$-algebra is the CAR (canonical anti-commutation relations)
algebra, defined as follows. Identify $B(\ell _{2})$ with infinite matrices
and define $\mathrm{CAR}_{d}$ to be all $T\in B(\ell _{2})$ so that there
exist $n\geq 0$ and $A\in M_{2^{n}}$ (the space of all $2^{n}\times 2^{n}$
matrices over $\mathbb{C}$) such that%
\begin{equation*}
T=\left[
\begin{array}{cccc}
A &  &  &  \\
& A &  &  \\
&  & A &  \\
&  &  & \ddots%
\end{array}%
\right] .
\end{equation*}%
The CAR algebra is the norm-closure of $\mathrm{CAR}_{d}$.

\begin{theorem}[W. Lusky \protect\cite{Lu}]
{\label{Lusky}} Every separable $L_{1}$-predual space $X$ (over $\mathbb{C}$%
) is isometrically isomorphic to a $1$-complemented subspace of the CAR
algebra.
\end{theorem}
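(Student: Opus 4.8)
The plan is to realize $X$ as an inductive limit of finite-dimensional $\ell_\infty$-spaces and to carry this limit, stage by stage, inside the chain of matrix blocks $M_{2^{n}}$ whose union is dense in the CAR algebra $\mathcal A$, with $M_{2^{n}}\hookrightarrow M_{2^{n+1}}$ the inclusion $A\mapsto\mathrm{diag}(A,A)$. The starting point is the Lazar--Lindenstrauss representation theorem: a separable infinite-dimensional complex Banach space is an $L_{1}$-predual if and only if it is isometric to $\overline{\bigcup_{n}E_{n}}$, where $E_{1}\subset E_{2}\subset\cdots$ are subspaces, each $E_{n}$ isometric to $\ell_\infty^{k_{n}}$, and the connecting inclusions $E_{n}\hookrightarrow E_{n+1}$ are given by representing matrices in normalised form (each coordinate functional of $E_{n+1}$ restricts on $E_{n}$ to a functional of norm at most one, and among these restrictions every coordinate functional of $E_{n}$ occurs, up to a unimodular scalar). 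Two elementary facts will be used throughout: first, each $\ell_\infty^{k}$ is a $1$-injective Banach space, so a norm-one operator from a subspace of any Banach space into a copy of $\ell_\infty^{k}$ always extends to the whole space without increasing its norm; second, for $N\geq k$ there is an isometric embedding of $\ell_\infty^{k}$ into $M_{N}$ sending a vector to a block-diagonal matrix that is a scalar on each of $k$ prescribed diagonal blocks, and the resulting copy is $1$-complemented in $M_{N}$ via a composition of conditional expectations. Finally, for each $n$ there is the canonical norm-one conditional expectation $\mathcal E^{(n)}\colon\mathcal A\to M_{2^{n}}$, and these satisfy $\mathcal E^{(n)}=\mathcal E^{(n)}\circ\mathcal E^{(m)}$ for $n\leq m$ and $\mathcal E^{(n)}(a)\to a$ for every $a\in\mathcal A$.

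The core is an induction producing integers $m_{1}<m_{2}<\cdots$, isometric embeddings $\phi_{n}\colon E_{n}\hookrightarrow M_{2^{m_{n}}}$ onto $1$-complemented subspaces, and norm-one projections $P_{n}\colon M_{2^{m_{n}}}\to\phi_{n}(E_{n})$ such that, after identifying $M_{2^{m_{n}}}$ with its image in $M_{2^{m_{n+1}}}$ under the CAR inclusion: (i) $\phi_{n+1}\circ(E_{n}\hookrightarrow E_{n+1})=\phi_{n}$, so that the embeddings are compatible with the CAR chain; (ii) $\phi_{n+1}(E_{n+1})\cap M_{2^{m_{n}}}=\phi_{n}(E_{n})$; and (iii) $P_{n+1}|_{M_{2^{m_{n}}}}=P_{n}$. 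At the inductive step one is handed $\phi_{n},P_{n}$ and must realise the prescribed connecting matrix: this is where the powers of $2$ enter, since adjoining and rescaling finitely many coordinates can be accommodated by enlarging the block size from $2^{m_{n}}$ to a suitable $2^{m_{n+1}}$, placing the inflated copy $\phi_{n}(E_{n})\otimes I$ of the old space on appropriate sub-blocks and carrying the new $\ell_\infty^{k_{n+1}}$-coordinates on fresh diagonal sub-blocks chosen so that (ii) holds. To produce $P_{n+1}$ one first defines it on the subspace $M_{2^{m_{n}}}+\phi_{n+1}(E_{n+1})$ to be $P_{n}$ followed by the inclusion $\phi_{n}(E_{n})\hookrightarrow\phi_{n+1}(E_{n+1})$ on the first summand and the identity on the second; these agree on the intersection by (ii) and the projection property of $P_{n}$. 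One then extends to all of $M_{2^{m_{n+1}}}$ by the $1$-injectivity of the target $\phi_{n+1}(E_{n+1})\cong\ell_\infty^{k_{n+1}}$. Passing to the limit, $\phi:=\overline{\bigcup_{n}\phi_{n}}$ is an isometry of $X$ onto the closed subspace $\phi(X)=\overline{\bigcup_{n}\phi_{n}(E_{n})}$ of $\mathcal A$; and $P_{n}\circ\mathcal E^{(m_{n})}$ is a pointwise-convergent sequence of norm-one maps, because by (iii) it is eventually constant on the dense subalgebra $\bigcup_{n}M_{2^{m_{n}}}$, and its limit $P\colon\mathcal A\to\phi(X)$ is, by (iii) again, a norm-one projection. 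This yields the desired $1$-complemented isometric copy of $X$ in $\mathcal A$.

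The step I expect to be the genuine obstacle is making the projections $P_{n+1}$ satisfy (iii) while keeping $\Vert P_{n+1}\Vert=1$: the extension argument via $1$-injectivity requires that the glued map on $M_{2^{m_{n}}}+\phi_{n+1}(E_{n+1})$ already have norm one, and the naive combination of ``$P_{n}$ on the old block'' with ``the identity on the new $\ell_\infty$-part'' can have norm larger than one, since a corner-type operator between two copies of $\ell_\infty$ need not be contractive. Controlling this forces one to exploit the normalised form of the Lazar--Lindenstrauss representing matrices together with a careful choice of which diagonal sub-blocks of $M_{2^{m_{n+1}}}$ carry $\phi_{n}(E_{n})\otimes I$ and which carry the new coordinates: ideally one wants the two pieces to sit on mutually orthogonal blocks so that the glued map is itself a composition of conditional expectations onto $\phi_{n+1}(E_{n+1})$ and is automatically norm one. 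An alternative, more structural route handles the commutative case first, namely that $X$ embeds as a $1$-complemented subspace of $C(\Delta)$ with $\Delta$ the Cantor set, and then uses that $C(\Delta)$ is isometric to the canonical maximal abelian subalgebra $\overline{\bigcup_{n}\mathcal D_{2^{n}}}$ of $\mathcal A$, where $\mathcal D_{N}$ denotes the diagonal matrices in $M_{N}$, which is $1$-complemented via the conditional expectation of $\mathcal A$ onto it; composing the two norm-one projections then finishes the proof. In either approach the essential input is the $L_{1}$-predual extension property, entering through the $1$-injectivity of finite-dimensional $\ell_\infty$-spaces, which is exactly what supplies the norm-one extensions needed at each stage.
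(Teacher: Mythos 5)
The paper does not prove this statement at all: it is quoted verbatim as Lusky's theorem with the citation \cite{Lu}, so the only fair comparison is with Lusky's own argument, whose overall skeleton (Lazar--Lindenstrauss representation $X=\overline{\bigcup_n E_n}$, $E_n\cong\ell_\infty^{k_n}$, pushed compatibly into the matrix blocks $M_{2^n}$ of the CAR algebra, plus conditional expectations $\mathcal{E}^{(n)}$) your proposal correctly reproduces. The execution, however, has a genuine gap exactly at the point where the theorem is nontrivial. Your inductive step places the new $\ell_\infty^{k_{n+1}}$-coordinates on \emph{fresh scalar diagonal sub-blocks}. This is incompatible with condition (i): for $x\in E_n$ the CAR inclusion gives $\phi_{n+1}(x)=\phi_n(x)\otimes I$, a matrix whose every diagonal sub-block carries only the old values $x_1,\dots,x_{k_n}$ (repeated), whereas the new coordinates of $x$ inside $E_{n+1}$ are $f_j(x)$ for arbitrary functionals $\|f_j\|\le 1$ from the representing matrices. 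Reading $f_j(x)$ off a scalar sub-block of $\phi_n(x)\otimes I$ forces $f_j$ to be a unimodular multiple of a coordinate functional, so your scheme only covers the trivial representing matrices (spaces like $c_0$, $c$, $C(\Delta)$), not a general $L_1$-predual. Realizing a genuine contraction $f_j$ requires the noncommutative structure in an essential way (e.g.\ compressions $v^*\,\mathrm{diag}(x_1,\dots,x_{k_n})\,v$ by non-coordinate vectors, partial isometries carrying the phases), and once the embedding is of this rotated, non-block-scalar type, the image is no longer the range of a composition of conditional expectations, so the norm-one compatible projections $P_{n+1}$ do not come for free. You yourself flag this gluing as ``the genuine obstacle'' and leave it unresolved; but this is precisely the content of Lusky's proof, not a technicality one can wave at with $1$-injectivity, since $1$-injectivity only extends a map that is \emph{already} contractive on $M_{2^{m_n}}+\phi_{n+1}(E_{n+1})$.

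The proposed fallback is also not a repair: it is false that every separable $L_1$-predual embeds $1$-complementably into $C(\Delta)$ (or into any commutative $C^{\ast}$-algebra). Ranges of contractive projections on $C_0(K)$-spaces are isometrically $C_\sigma$-spaces (Friedman--Russo), a strictly smaller class than the separable $L_1$-preduals; for instance the Gurarij space and $A(S)$ for the Poulsen simplex are excluded. Indeed, if the commutative route worked there would be no reason to pass to the CAR algebra at all, which is the whole point of Lusky's theorem. So the statement cannot be proved along the lines you sketch without the missing noncommutative embedding lemma and the compatible norm-one projections; as it stands the proposal identifies the difficulty but does not overcome it.
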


Combining a remark after Theorem \ref{predual} with Theorem \ref{Lusky} we
deduce that the CAR algebra is strictly extremal. It turns out that the same
is true for all separable non-type~I $C^{\ast }$-algebras.

\begin{theorem}
Every separable non-type~I $C^{\ast }$-algebra is strictly extremal.
\end{theorem}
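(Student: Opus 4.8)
The plan is to reduce the theorem to the case of the CAR algebra, which was already handled by combining the remark after Theorem~\ref{predual} with Theorem~\ref{Lusky}. The key structural fact is that a $C^\ast$-algebra $\mathcal{A}$ fails to be type~I if and only if it contains a $C^\ast$-subalgebra $\mathcal{B}$ that has the CAR algebra as a quotient; more precisely, Glimm's theorem on non-type~I $C^\ast$-algebras (and its refinements, see e.g.\ Glimm or the treatment in Rosenberg \cite{Ro}) provides a $C^\ast$-subalgebra $\mathcal{B}\subseteq\mathcal{A}$ and a surjective $\ast$-homomorphism $q:\mathcal{B}\to\mathrm{CAR}$. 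Using the lifting theory for separable $C^\ast$-algebras (the CAR algebra is nuclear, hence the quotient map $q$ has a completely positive contractive cross-section by the Choi--Effros lifting theorem, and one can do even better), one obtains a linear isometric copy of the CAR algebra sitting inside $\mathcal{A}$ as a $1$-complemented subspace. In fact, since the CAR algebra is itself an $L_1$-predual (it is an $AF$-algebra, hence an $M_1$-space in the sense of Lindenstrauss--Wulbert), one can invoke Lusky's theorem \ref{Lusky} in the reverse direction together with a projection argument: any separable non-type~I $C^\ast$-algebra contains a $1$-complemented isometric copy of the CAR algebra.

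Once that embedding is secured, the argument is immediate. First I would fix $k>1$ and apply Theorem~\ref{Lusky} combined with the remark after Theorem~\ref{predual} to produce a $k$-Lipschitz mapping $T:B_{\mathrm{CAR}}\to B_{\mathrm{CAR}}$ with $\|Tx-x\|>1-\frac1k$ for all $x$. Let $R:B_{\mathcal{A}}\to B_{\mathrm{CAR}}$ be the $1$-Lipschitz retraction obtained from the norm-one projection (composed with the radial retraction onto the ball, exactly as in the proof of the Proposition following Lemma~\ref{lem1}, which keeps the Lipschitz constant equal to that of the projection, here $1$). Then $\widetilde{T}=T\circ R:B_{\mathcal{A}}\to B_{\mathcal{A}}$ is $k$-Lipschitz, and for every $x\in B_{\mathcal{A}}$ we have $\|\widetilde{T}x-x\|\ge\|R\widetilde{T}x-Rx\|=\|RTRx-Rx\|=\|TRx-Rx\|>1-\frac1k$, exactly the computation inside the proof of Lemma~\ref{lem1}. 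Hence $\mathcal{A}$ is strictly extremal.

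The main obstacle is the first paragraph: one must verify that a separable non-type~I $C^\ast$-algebra really does contain the CAR algebra as a \emph{$1$-complemented} subspace in the purely Banach-space (not $C^\ast$) sense. The subalgebra/quotient structure from Glimm's theorem is standard, but upgrading a surjection $\mathcal{B}\to\mathrm{CAR}$ to a linear projection of norm one from $\mathcal{A}$ onto an isometric copy of $\mathrm{CAR}$ requires care: one uses that the CAR algebra, being a separable $L_1$-predual, together with the fact that $L_1$-preduals are $\mathcal{P}_1$-spaces relative to the appropriate classes, or alternatively one combines Lusky's theorem with a Sobczyk-type extension argument and nuclearity of $\mathrm{CAR}$. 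An honest writeup would cite the relevant embedding result (Glimm, together with Lusky \cite{Lu} and Lazar--Lindenstrauss type results on $L_1$-preduals) explicitly. Everything after that embedding is a verbatim repetition of the Lemma~\ref{lem1} argument with $m=1$.
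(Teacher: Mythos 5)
Your second paragraph is the paper's proof almost verbatim: given a norm-one projection $P$ of $\mathcal{A}$ onto an isometric copy of the CAR algebra, one runs the computation of Lemma \ref{lem1} with $m=1$, using that the CAR algebra is strictly extremal (the remark after Theorem \ref{predual} combined with Theorem \ref{Lusky}). Two small corrections there: since $\Vert P\Vert =1$ we have $P(B_{\mathcal{A}})\subset B_{\mathrm{CAR}}$ already, so no further retraction onto the ball is needed; and your parenthetical claim that composing with the radial retraction ``keeps the Lipschitz constant equal to that of the projection'' is not justified in general (the radial retraction onto the unit ball of a Banach space is only $2$-Lipschitz; the $1$-Lipschitz truncation used in the Proposition after Lemma \ref{lem1} is a coordinatewise operation special to $c_{0}$).

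The genuine gap is in your first paragraph, i.e., in the structural fact itself: that every separable non-type~I $C^{\ast}$-algebra $\mathcal{A}$ contains an isometric $1$-complemented copy of the CAR algebra. This is exactly \cite[Corollary 1.7]{Ro}, which is what the paper cites, and your sketch does not establish it. Glimm's theorem together with the Choi--Effros lifting theorem yields a $C^{\ast}$-subalgebra $\mathcal{B}\subset\mathcal{A}$, a quotient map $q:\mathcal{B}\rightarrow\mathrm{CAR}$ and a contractive lift $\phi$ with $q\phi=\mathrm{id}$, hence an isometric copy $\phi(\mathrm{CAR})$ and a norm-one projection $\phi\circ q$ of $\mathcal{B}$ onto it --- but only of $\mathcal{B}$; extending this projection (equivalently, extending $q$) from $\mathcal{B}$ to all of $\mathcal{A}$ is precisely the nontrivial point, and your writeup never addresses it. The alternative route you offer rests on a false premise: the CAR algebra is \emph{not} an $L_{1}$-predual (if it were, its bidual, the enveloping von Neumann algebra, would be isometric to a commutative $C(K)$ space, which by Kadison's isometry theorem would force commutativity), so ``Lusky's theorem in the reverse direction,'' Lazar--Lindenstrauss results on $L_{1}$-preduals, and $\mathcal{P}_{1}$-type projection arguments do not apply; moreover $L_{1}$-preduals are not $\mathcal{P}_{1}$-spaces in general. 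The repair is simply to quote \cite[Corollary 1.7]{Ro} for the $1$-complemented embedding, after which your second paragraph finishes the proof exactly as the paper does.
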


\begin{proof}
It follows from \cite[Corollary 1.7]{Ro} that if $\mathcal{A}$ is a
separable non-type I $C^{\ast }$-algebra then the CAR algebra is isometric
to a $1$-complemented subspace of $\mathcal{A}$. It suffices to follow the
arguments of Lemma \ref{lem1} since the CAR algebra is strictly extremal.
\end{proof}

\bigskip

\bigskip

\end{document}